%
%
\documentclass[12pt,thmsa]{article}
\usepackage{amsmath, latexsym, amsfonts, amssymb, amsthm, amscd}
\usepackage{color}

\textheight 230mm \topmargin 0cm \textwidth 175mm \headheight 0pt
\oddsidemargin -0.5cm\headsep 0in


\newtheorem{theorem}{Theorem}
\newtheorem{corollary}{Corollary}

\newtheorem{rem}{Remark}

\newcommand{\p}{\Bbb{P}}

\newcommand{\e}{\Bbb{E}}

\newcommand{\R}{\mathbb{R}}

\newcommand{\ud}{\mathrm{d}}

\newcommand{\reals}{\mathbb{R}}
\newcommand{\ind}{\mathbf{1}}

\newcommand{\exptime}{\mathbf{e}_q}
\newcommand{\parisiantime}{\tau_q}

\newcommand{\qscale}{W^{(q)}}
\newcommand{\pscale}{W^{(p)}}

\title{\textbf{Gerber--Shiu distribution at Parisian ruin for L\'evy insurance risk processes}}

\author{\textbf{E.J. Baurdoux\footnote{Department of Statistics, London School of Economics. Houghton street, {\sc London, WC2A 2AE, United Kingdom.} Email: e.j.baurdoux@lse.ac.uk}\, , J.C. Pardo\footnote{Centro de Investigaci\'on en Matem\'aticas A.C. Calle Jalisco s/n. C.P. 36240, {\sc Guanajuato, Mexico.}  Email: jcpardo@cimat.mx. Corresponding author.}\,\,, J.L. P\'erez\footnote{Department of Probability and Statistics, IIMAS, UNAM. , C.P. 04510 {\sc Mexico, D.F., Mexico.} Email: garmendia@sigma.iimas.unam.mx}\, , J.-F. Renaud\footnote{D\'epartement de math\'ematiques, Universit\'e du Qu\'ebec \`a Montr\'eal (UQAM). 201 av.\ Pr\'esident-Kennedy, {\sc Montr\'eal (Qu\'ebec) H2X 3Y7, Canada.} Email: renaud.jf@uqam.ca}}}

\date{\footnotesize This version: \today}

\begin{document}

\maketitle

\begin{abstract}
\bigskip

Inspired by works of Landriault et al. \cite{LRZ-0, LRZ}, we study the Gerber--Shiu distribution at Parisian ruin with exponential implementation delays for a spectrally negative L\'evy insurance risk process. To be more specific, we study the so-called Gerber--Shiu distribution for a ruin model where at each time the surplus process goes negative, an independent exponential clock is started. If the clock rings before the  surplus becomes positive again then the insurance company is ruined. Our methodology uses excursion theory for spectrally negative L\'evy processes and relies on the theory of so-called scale functions. In particular, we extend recent results of Landriault et al. \cite{LRZ-0, LRZ}.  

\bigskip

\noindent {\sc Key words}: Scale functions, Parisian ruin, L\'evy processes, fluctuation theory, Gerber--Shiu function, Laplace transform.\\
\noindent MSC 2010 subject classifications: 60J99, 60G51.
\end{abstract}

\vspace{0.5cm}

\section{Introduction and main results}

Originally motivated by  pricing  American claims, Gerber and Shiu \cite{GS, GS1} introduced in risk theory a function that jointly penalizes the present value of the time of ruin, the surplus before ruin and the deficit after ruin for Cram\'er--Lundberg-type processes.
Since then this expected discounted penalty function, by now known as the Gerber--Shiu function, has been deeply studied. Recently, Biffis and Kyprianou \cite{BK} characterized a generalized version of this function in the setting of processes with stationary and independent increments with no positive jumps, also known as spectrally negative L\'evy processes, using scale functions. In the current actuarial setting, we refer to the latter class of processes as L\'evy insurance risk processes.

In the traditional ruin theory literature, if the surplus becomes negative, the company is ruined and has to go out of business. Here, we distinguish between {\it being ruined} and {\it going out of business}, where the probability of {\it going out of business} is a function of the level of negative surplus. The idea of this notion of {\it going out of business} comes from the observation that in some industries, companies can continue doing business even though they are technically ruined (see \cite{LRZ-0} for more motivation). In this paper, our definition of {\it going out of business} is related to so-called Parisian ruin. The idea of this type of actuarial ruin has been introduced by A.\ Dassios and S.\ Wu \cite{DW}, where they consider the application of an implementation delay in the recognition of an insurer's capital insufficiency. More precisely, they assume that ruin occurs if the excursion below the critical threshold level is longer than a deterministic time. It is worth pointing out that this definition of ruin is referred to as Parisian ruin due to its ties with Parisian options (see Chesney et al. \cite{CJY}).

In \cite{DW}, the analysis of the probability of Parisian ruin is done in the context of the classical Cram\'er--Lundberg model. More recently, Landriault et al. \cite{LRZ-0,LRZ} and Loeffen et al. \cite{L} considered the idea of Parisian ruin with respectively a stochastic implementation delay and a deterministic implementation delay, but in the more general setup of L\'evy insurance risk models. In \cite{LRZ-0}, the authors assume that the deterministic delay is replaced by a stochastic grace period with a pre-specified distribution, but they restrict themselves to the study of a L\'evy insurance risk process with paths of bounded variation; explicit results are obtained in the case the delay is exponentially distributed. The model with a deterministic delay has also been studied in the L\'evy setup by Czarna and Palmowski \cite{CP} and by Czarna \cite{C}.

In this paper, we study the Gerber--Shiu distribution at Parisian ruin for general L\'evy insurance risk processes, when the implementation delay is exponentially distributed. Since the L\'evy insurance risk process does not jump at the time when Parisian ruin occurs, the Gerber--Shiu function that we present here only considers the discounted value of the surplus at ruin. Our results extend those of Landriault et al. \cite{LRZ-0}, in the exponential case, by simultaneously considering more general ruin-related quantities and L\'evy insurance risk processes of unbounded and bounded variation. Our approach is based on a \textit{heuristic} idea presented in \cite{LRZ} and which consists in \textit{marking} the excursions away from zero of the underlying surplus process. We will fill this gap and provide a rigorous definition of the time of Parisian ruin. Our main contribution is an explicit and compact expression, expressed in terms of the scale functions of the process, for the Gerber--Shiu distribution at Parisian ruin. From our results, we easily deduce the probability of Parisian ruin originally obtained by Landriault et al. \cite{LRZ-0,LRZ}.

The rest of the paper is organized as follows. In the remainder of Section 1, we introduce  L\'evy insurance risk processes and their associated scale functions and we state some well-known fluctuation identities that will be useful for the sequel. We also introduce, formally speaking,  the notion of Parisian ruin in terms of the excursions away from $0$ of the L\'evy insurance risk process and we provide the main results of this paper. As a consequence, we recover the results that appear in Landriault et al. \cite{LRZ-0,LRZ} and remark on an interesting link with recent findings in \cite{AIZ} on exit identities of spectrally negative L\'evy processes observed at Poisson arrival times. Section 2 is devoted to the proofs of the main results.

\subsection{L\'evy insurance risk processes}
In what follows, we assume that  $X=(X_t, t\geq 0)$ is  a spectrally negative L\'evy process  with no monotone paths (i.e. we exclude the case of the negative of a subordinator) defined on a  probability space $(\Omega, \mathcal{F}, \p)$.  For $x\in \R$ denote by $\p_x$ the law of $X$ when it is started at $x$ and write for convenience  $\p$ in place of $\p_0$. Accordingly, we shall write $\e_x$ and $\e$ for the associated expectation operators.  It is well known that the Laplace exponent $\psi:[0,\infty) \to \R$ of $X$, defined by
\[
\psi(\lambda):=\log\e\Big[{\rm e}^{\lambda X_1}\Big], \quad  \lambda\ge 0,
\]
 is given by the so-called L\'evy-Khintchine formula
\begin{equation}
\psi(\lambda)=\gamma\lambda+\frac{\sigma^2}{2}\lambda^2-\int_{(0, \infty)}\big(1-{\rm e}^{-\lambda x}-\lambda x\mathbf{1}_{\{x<1\}}\big)\Pi(\ud x),\notag
\end{equation}
where $\gamma\in \R$, $\sigma\ge 0$ and $\Pi$ is a measure on $(0,\infty)$  satisfying
\[
\int_{(0,\infty)}(1\land x^2)\Pi(\ud x)<\infty,
\]
which is called the L\'evy measure of $X$. Even though X only has negative jumps, for convenience we choose the L\'evy measure to have only mass on the positive
instead of the negative half line. 

It is also known that $X$ has paths of bounded variation if and only if 
\[
\sigma=0 \quad  \textrm{and} \quad \int_{(0, 1)} x \, \Pi(\mathrm{d}x)<\infty.
\]
 In this case $X$ can be written as
$X_t=ct-S_t,$ $t\geq 0$,
where $c=\gamma+\int_{(0,1)} x\Pi(\mathrm{d}x)$ and $(S_t,t\geq0)$ is a driftless subordinator. Note that  necessarily $c>0$, since we have ruled out the case that $X$ has monotone paths. In this case its Laplace exponent is given by
\begin{equation*}
\psi(\lambda)= \log \mathbb{E} \left[ \mathrm{e}^{\lambda X_1} \right] = c \lambda-\int_{(1,\infty)}\big(1- {\rm e}^{-\lambda x}\big)\Pi(\ud x).\notag
\end{equation*}
The reader is referred to Bertoin \cite{B} and Kyprianou \cite{K} for a complete introduction to the theory of L\'evy processes.

A key element of the forthcoming analysis relies on the theory of so-called scale functions for spectrally negative L\'evy processes. We therefore devote some time in this section reminding the reader of some fundamental properties of scale functions.  For
each $q\geq0$, define  $W^{(q)}:
\R\to [0, \infty),$ such that $W^{(q)}(x)=0$ for all $x<0$ and on $[0,\infty)$ is the unique continuous function with Laplace transform
\begin{eqnarray}
\int^{\infty}_0\mathrm{e}^{-\lambda x}W^{(q)}(x)\mathrm{d}x=\frac1{\psi(\lambda)-q}, \quad \lambda>\Phi(q),\notag
\end{eqnarray}
where $ \Phi(q) = \sup\{\lambda \geq 0: \psi(\lambda) = q\}$ which is well defined and finite for all $q\geq 0$, since $\psi$ is a strictly convex function satisfying $\psi(0) = 0$ and $\psi(\infty) = \infty$. The initial value of $W^{(q)}$ is known to be
\begin{equation*}
W^{(q)}(0)=
\begin{cases}
1/c & \text{when $\sigma=0$ and $\int_{(0, 1)} x \, \Pi(\mathrm{d}x)<\infty$},  \\
0 & \text{otherwise},
\end{cases}
\end{equation*}
where we used the following definition: $W^{(q)}(0) = \lim_{x \downarrow 0} W^{(q)}(x)$. For convenience, we write $W$ instead of $W^{(0)}$.
Associated to the functions $W^{(q)}$ are the functions $Z^{(q)}:\R\to[1,\infty)$ defined by
\[
Z^{(q)}(x)=1+q\int_{0}^x W^{(q)} (y)\ud y,\qquad q\ge 0.
\]
Together, the functions $W^{(q)}$ and $Z^{(q)}$ are collectively known as $q$-scale functions and predominantly appear in almost all fluctuation identities for spectrally negative L\'evy processes.


The theorem below is a collection of known fluctuation identities which will be used throughout this work. See, for example, Chapter 8 of \cite{K} for proofs and the origin of these identities.
\begin{theorem}\label{fi}
Let $X$ be a spectrally negative L\'evy process and denote (for $a>0$) the first passage times by
\begin{equation}
\tau_a^+=\inf\{t>0:X_t>a\}\qquad\text{and}\qquad\tau_0^-=\inf\{t>0:X_t<0\}.\notag
\end{equation}
\begin{itemize}
\item[(i)] For $q\geq 0$ and $x\leq a$
\begin{equation}\label{fi2}
\mathbb{E}_x\Big[ \mathrm e^{-q\tau_a^+} \mathbf{1}_{\{\tau_0^->\tau_a^+\}}\Big]=\frac{W^{(q)}(x)}{W^{(q)}(a)}.
\end{equation}
\item[(ii)] For any $a>0,x,y\in[0,a],q\geq 0$
\begin{equation}\label{fi4}
\int_0^{\infty} \mathrm e^{-q t}\mathbb{P}_x \left( X_t\in \mathrm dy,t<\tau_a^+\wedge\tau_0^- \right) \mathrm{d}t = \left\{\frac{W^{(q)}(x)W^{(q)}(a-y)}{W^{(q)}(a)}-W^{(q)}(x-y)\right\}\mathrm{d}y.
\end{equation}
\end{itemize}
\end{theorem}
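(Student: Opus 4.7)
The plan is to derive both identities from the standard scale-function calculus, using an exponential change of measure for part (i) and the strong Markov property at $\tau_a^+$ for part (ii).

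For part (i), I would start with the Esscher transform associated with $\Phi(q)$. The process $M_t := \exp(\Phi(q) X_t - qt)$ is a unit-mean $\mathcal{F}_t$-martingale under $\mathbb{P}_x$, and the change of measure $\mathrm{d}\mathbb{P}^{\Phi(q)}_x/\mathrm{d}\mathbb{P}_x|_{\mathcal{F}_t} = \exp(\Phi(q)(X_t - x) - qt)$ keeps $X$ a spectrally negative L\'evy process with Laplace exponent $\psi_{\Phi(q)}(\lambda) := \psi(\lambda + \Phi(q)) - q$. Since $\psi'_{\Phi(q)}(0+) = \psi'(\Phi(q)) > 0$, the transformed process drifts to $+\infty$, so $\tau_a^+ < \infty$ almost surely under $\mathbb{P}^{\Phi(q)}_x$. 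Applying optional stopping to $M$ at $\tau_a^+ \wedge \tau_0^- \wedge n$, letting $n \to \infty$, and using that $X_{\tau_a^+} = a$ on the event $\{\tau_0^- > \tau_a^+\}$ (since $X$ has no positive jumps), yields
\[
\mathbb{E}_x\bigl[e^{-q\tau_a^+}\mathbf{1}_{\{\tau_0^- > \tau_a^+\}}\bigr] = e^{-\Phi(q)(a-x)}\,\mathbb{P}^{\Phi(q)}_x[\tau_0^- > \tau_a^+].
\]
The problem thus reduces to the $q=0$ two-sided exit probability for the transformed process. This non-discounted identity $\mathbb{P}_x[\tau_a^+ < \tau_0^-] = W(x)/W(a)$ is a classical fluctuation result, obtained either via excursion theory at the running supremum or by verifying that $x \mapsto W(x)/W(a)$ solves the appropriate Dirichlet problem for the killed process. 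Finally, the identity $W_{\Phi(q)}(x) = e^{-\Phi(q)x} W^{(q)}(x)$ (checked by comparing Laplace transforms) turns $W_{\Phi(q)}(x)/W_{\Phi(q)}(a)$ into $e^{\Phi(q)(a-x)} W^{(q)}(x)/W^{(q)}(a)$, cancelling the exponential prefactor and delivering the claim.

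For part (ii), I would bootstrap from the one-sided resolvent
\[
\int_0^\infty e^{-qt}\,\mathbb{P}_x\bigl(X_t \in \mathrm{d}y,\, t < \tau_0^-\bigr)\,\mathrm{d}t = \bigl\{e^{-\Phi(q)y} W^{(q)}(x) - W^{(q)}(x-y)\bigr\}\mathrm{d}y,
\]
itself obtainable from the Wiener--Hopf factorisation together with the ladder-height representation of $W^{(q)}$. The two-sided resolvent then arises from a strong-Markov decomposition at $\tau_a^+$: the contribution of $X$ after it first crosses $a$ (if it does so before reaching $0$) must be subtracted from the one-sided resolvent, giving
\[
u^q_{[0,a]}(x,y) = u^q_{[0,\infty)}(x,y) - \mathbb{E}_x\bigl[e^{-q\tau_a^+}\mathbf{1}_{\{\tau_a^+ < \tau_0^-\}}\bigr]\,u^q_{[0,\infty)}(a,y).
\]
Substituting part (i) and the one-sided formula above causes the $e^{-\Phi(q)y}$ terms to cancel exactly, leaving $W^{(q)}(x)W^{(q)}(a-y)/W^{(q)}(a) - W^{(q)}(x-y)$ as required.

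The main obstacle is not the strong-Markov assembly sketched above but establishing the scale-function calculus that underlies it: defining $W^{(q)}$ through its Laplace transform and identifying it with the correct object in the one-sided fluctuation theory requires either the Wiener--Hopf machinery or a delicate analysis of excursions of $X$ away from its running supremum. Once that foundation is in place (as in Chapter 8 of \cite{K}), both parts follow in only a few lines via the exponential change of measure and the strong Markov property.
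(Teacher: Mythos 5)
The paper does not prove this theorem: it is stated as a collection of known fluctuation identities, with the reader referred to Chapter~8 of \cite{K}. Your reconstruction is correct and is essentially the standard argument found there. The Esscher transform plus optional stopping correctly reduces part~(i) to the non-discounted two-sided exit $\p_x[\tau_a^+<\tau_0^-]=W(x)/W(a)$ for the tilted process, and the scale-function tilting relation $W_{\Phi(q)}(x)=\mathrm e^{-\Phi(q)x}W^{(q)}(x)$ closes the loop. Your strong-Markov decomposition at $\tau_a^+$,
\[
u^q_{[0,a]}(x,y)=u^q_{[0,\infty)}(x,y)-\e_x\bigl[\mathrm e^{-q\tau_a^+}\ind_{\{\tau_a^+<\tau_0^-\}}\bigr]\,u^q_{[0,\infty)}(a,y),
\]
is valid, and plugging in part~(i) together with $u^q_{[0,\infty)}(x,y)=\mathrm e^{-\Phi(q)y}W^{(q)}(x)-W^{(q)}(x-y)$ does cancel the $\mathrm e^{-\Phi(q)y}$ terms as claimed. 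The one point to watch is a potential circularity: in several standard references (including \cite{K}) the two-sided resolvent \eqref{fi4} is proved first and the one-sided resolvent is then obtained as its $a\to\infty$ limit, so if you bootstrap \eqref{fi4} from the one-sided formula you must establish the latter independently (via Wiener--Hopf together with the ladder-height/potential-measure representation of $W^{(q)}$), as you indicate. With that detail in place the argument is complete.
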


Finally, we recall the following two useful identities taken from \cite{LoRZ}: for $p,q \geq 0$ and $x \in \reals$, we have 
\begin{equation}\label{eq:scale_sym}
(q-p)\int_0^x\pscale(x-y)\qscale(y)\mathrm{d}y=\qscale(x)-\pscale(x)
\end{equation}
and, for $p,q \geq 0$ and $y \leq a \leq x \leq b$, we have (with the obvious notation for $\tau_a^-$) 
\begin{multline}\label{eq:exp_scale}
\mathbb E_x \left[  \mathrm e^{-p\tau_a^-} W^{(q)}(X_{\tau_a^-} -y) \mathbf{1}_{\{\tau_a^-<\tau_b^+\}} \right] = W^{(q)}(x-y) - (q-p) \int_a^x W^{(p)}(x-z) W^{(q)}(z-y)\mathrm{d}z \\
- \frac{W^{(p)}(x-a)}{W^{(p)}(b-a)} \left( W^{(q)}(b-y) - (q-p) \int_a^b W^{(p)}(b-z) W^{(q)}(z-y) \mathrm{d}z \right) .
\end{multline}
%

\subsection{Parisian ruin with exponential implementation delays}

We first give a descriptive definition of the time of Parisian ruin, here denoted by $\tau_q$,  using It\^o's excursion theory (for excursions away from zero) for spectrally negative L\'evy processes. In order to do so, we \textit{mark} the Poisson point process of excursions away from zero with independent copies of a generic exponential random variable  $\mathbf{e}_q$ with parameter $q>0$. We will refer to them as implementation clocks. If  the length of the negative part of a given  excursion away from $0$ is less than its associated implementation clock, then such an excursion is neglected as far as ruin is
concerned. More precisely, we assume that  ruin occurs at the first time that an implementation
clock rings before the end of its corresponding excursion. Formally, let $G$ be the set of left-end points of negative excursions, and for each $g\in G$ consider an independent, exponentially distributed random variables $\mathbf{e}_q^{g}$, also independent of $X$. Then we define the time of Parisian ruin by
\[\tau_q=\inf\{t: X_t<0 \mbox { and } t>g_t +\mathbf{e}_q^{g_t}\} , \]
where $g_t=\sup\{s\leq t: X_s\geq 0\}$. Note that $X_t<0$ implies that $g_t\in G$.

It is worth pointing out that $\tau_q$ can be defined recursively in the case when the L\'evy insurance risk processes has paths of bounded variation, see for example \cite{LRZ-0}. We will not make any assumptions on the variation of $X$ here and our method uses a limiting argument which is motivated by the work of Loeffen et al. \cite{L}. For ease of presentation, we assume in this section that the underlying L\'evy insurance risk process $X$ satisfies the \textit{net profit condition}, i.e. 
\begin{equation}\label{net_profit_condition}
\e[X_1]=\psi^\prime(0+)> 0.
\end{equation}
Note that this assumption is actually not needed for our main results in the next section and is really only relevant to retrieve from our formulas the expression for $\mathbb{P}_x(\tau_q<\infty)$ as established in \cite{LRZ}; see Equation~\eqref{Parisruin} below.

Let $\varepsilon>0$ and consider the path of $X$ up to the first time that the process returns to $0$ after reaching the level $-\varepsilon$, More precisely, on the event $\{\tau_{-\varepsilon}^-<\infty\}$ let
\[
(X_t, 0\le t\le \tau_0^{+,\varepsilon})\qquad\textrm{where }\qquad  \tau_0^{+,\varepsilon}=\inf\{t >\tau_{-\varepsilon}^-: X_t>0\}.
\]
Let $\tau_{-\varepsilon}^{-,1}:=\tau_{-\varepsilon}^-$ and $\tau_0^{+, 1}:=\tau_0^{+,\varepsilon}$. Recursively, we define two sequences of stopping times $(\tau_{-\varepsilon}^{-,k})_{k\geq 1}$ and $(\tau_{0}^{+,k})_{k\geq 1}$ as follows: for $k\geq 2$, if
\begin{equation*}
\tau_{-\varepsilon}^{-,k}:=\inf\{t>\tau_{0}^{+,k-1}:X_t<-\varepsilon\}
\end{equation*}
is finite, define
\begin{equation*}
\tau_{0}^{+,k}=\inf\{t>\tau_{-\varepsilon}^{-,k}:X_t>0\}.
\end{equation*}
Let
\[K^\varepsilon=\inf\{k: \tau_{-\varepsilon}^{-,k}=\infty\}\] 
and denote 
$Y^{(k)}=(X_{t},  \tau_{0}^{+,k-1}\le t\le \tau_{0}^{+,k})$  for $k<K^\varepsilon$. 
We call $(Y^{(k)})_{1\leq k<K^\varepsilon}$ the $\varepsilon$-excursions of $X$ away from $0$ and note that due to the strong Markov property they are independent and identically distributed.
 Observe that under the net profit condition (\ref{net_profit_condition}) we necessarily have that $K^\varepsilon$ is almost surely finite. We also observe that the limiting case, i.e.\ when $\varepsilon$ goes to $0$, corresponds to the usual excursion of $X$ away from $0$. To avoid confusion we call the limiting case a $0$-excursion.
Note that each $\varepsilon$-excursion ends with a $0$-excursion that reaches the level $-\varepsilon$ (possibly preceeded by excursions not reaching this level). For each $k\ge 1$, we denote by $\mathbf{e}_q^k$ the implementation clock of the last $0$-excursion of $Y^{(k)}$.  

We define the approximated Parisian ruin time $\tau^{\varepsilon}_q$ as in \cite{LRZ-0} by
\begin{equation*}
\tau^{\varepsilon}_q:=\tau_{-\varepsilon}^{-,k^{\varepsilon}_q}+\mathbf{e}_q^{k^{\varepsilon}_q},\notag
\end{equation*}
where
\begin{equation*}
k_q^{\varepsilon}=\inf\{k\geq 1:\tau_{-\varepsilon}^{-,k}+\mathbf{e}_q^{k}<\tau_{0}^{+,k}\}.\notag
\end{equation*}

To see why $\tau_q^\varepsilon$ is an approximation of $\tau_q$, first note that $\tau_q^\varepsilon\geq\tau_q$. This follows from the observations that $X_s<0$ for all  $s\in(\tau_{-\varepsilon}^{-,k^{\varepsilon}_q},\tau_{-\varepsilon}^{-,k^{\varepsilon}_q}+\mathbf{e}_q^{k^{\varepsilon}_q})$ and that $\tau_{-\varepsilon}^{-,k^{\varepsilon}_q}$ is clearly greater than the left-end point of the negative excursion it is contained in. 
Furthermore, since $\lim_{\varepsilon\downarrow 0} \tau_\varepsilon^-=\tau_0^-$ $\mathbb{P}$-a.s., it readily follows that
\begin{equation}\label{tdp}
\tau_q^{\varepsilon}\xrightarrow[\varepsilon\downarrow0]{}\tau_q, \quad\text{$\mathbb{P}$-a.s. }
\end{equation}



\subsection{Main results}

In this section, we are  interested in computing different Gerber--Shiu functions for a L\'evy insurance risk process subject to Parisian ruin, as defined in the previous section. To do so, we first identify the Gerber--Shiu distribution.  It is important to point out that in all the results in this subsection the net profit condition is not necessary.

Let us now define two auxiliary functions which will appear in the Gerber--Shiu distribution. First, for $p\geq0$ and $q\in\mathbb R$ such that $p+q\geq0$ and for $x \in \mathbb{R}$, define as in \cite{LoRZ} the function
\begin{equation*}
 \mathcal{H}^{(p,q)}(x) = \mathrm e^{\Phi(p)x} \left( 1 + q\int_0^{x}  \mathrm e^{-\Phi(p)y} W^{(p+q)}(y) \mathrm dy \right) .
\end{equation*}
We further introduce, for $\theta, q\ge  0$,  $x>0$  and $y\in [-x, \infty)$, the function
\begin{equation}\label{eqg}
g(\theta, q, x, y)=W^{(\theta+q)}(x+y) - q \int_0^x W^{(\theta)}(x-z) W^{(\theta+q)}(z+y) \mathrm{d}z.
\end{equation}
Note that $g$ is of the same form as $\mathcal{W}_a^{(p,q)}$ in \cite{LoRZ}.
%

Here is the main result of this paper.
\begin{theorem}\label{main}
For $\theta, a, b \geq0$, $x \in [-a,b)$ and $y \in [-a,0]$, we have
\begin{equation}\label{main2}
\e_x \Big[ \mathrm{e}^{-\theta \parisiantime}, X_{\parisiantime} \in \mathrm{d}y , \parisiantime < \tau_b^+ \wedge \tau_{-a}^- \Big] = q\Bigg[\frac{g(\theta, q, x, a)}{g(\theta, q, b, a)}g(\theta, q, b, -y)- g(\theta, q, x, -y)\Bigg] \mathrm{d}y .
\end{equation}
\end{theorem}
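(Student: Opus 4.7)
The plan is to work with the approximation $\tau_q^{\varepsilon}$ constructed in the paper, compute the analogous expression
\[
F^{\varepsilon}(x)\, \mathrm{d}y := \mathbb{E}_x \Big[\mathrm{e}^{-\theta \tau_q^{\varepsilon}} \mathbf{1}_{\{X_{\tau_q^{\varepsilon}} \in \mathrm{d}y\}} \mathbf{1}_{\{\tau_q^{\varepsilon} < \tau_b^+ \wedge \tau_{-a}^-\}} \Big]
\]
in closed form for each fixed $\varepsilon>0$ using Theorem \ref{fi} and the identity \eqref{eq:exp_scale}, and then pass to the limit $\varepsilon \downarrow 0$ by means of \eqref{tdp}. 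I will first treat $x \in [0,b)$; the case $x \in [-a,0)$ then follows afterwards from a single application of the strong Markov property at $\tau_0^+$ together with the $x=0$ formula, under the natural reading $g(\theta,q,x,\cdot) = W^{(\theta+q)}(x+\cdot)$ valid whenever $x \leq 0$.

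The central combinatorial step is a renewal-type decomposition over the $\varepsilon$-excursions. Splitting on whether $k_q^{\varepsilon}=1$ or $k_q^{\varepsilon}>1$ and applying the strong Markov property at $\tau_0^{+,1}$ yields
\[
F^{\varepsilon}(x) \;=\; A^{\varepsilon}(x) \;+\; M^{\varepsilon}(x)\, F^{\varepsilon}(0),
\]
where $A^{\varepsilon}(x)\,\mathrm{d}y$ is the contribution on $\{k_q^{\varepsilon}=1\}$ and $M^{\varepsilon}(x) := \mathbb{E}_x[\mathrm{e}^{-\theta \tau_0^{+,1}- q(\tau_0^{+,1} - \tau_{-\varepsilon}^-)} \mathbf{1}_{\{\tau_0^{+,1} < \tau_b^+ \wedge \tau_{-a}^-\}}]$ is the discounted probability that the first $\varepsilon$-excursion ends without triggering ruin while remaining in $[-a,b]$; iterating gives $F^{\varepsilon}(0) = A^{\varepsilon}(0)/(1-M^{\varepsilon}(0))$. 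A second strong Markov step, applied at $\tau_{-\varepsilon}^-$, together with Theorem \ref{fi}(i) (shifted by $a$), reduces
\[
M^{\varepsilon}(x) \;=\; \frac{1}{W^{(\theta+q)}(a)}\, \mathbb{E}_x\big[\mathrm{e}^{-\theta \tau_{-\varepsilon}^-} W^{(\theta+q)}(X_{\tau_{-\varepsilon}^-}+a)\, \mathbf{1}_{\{\tau_{-\varepsilon}^- < \tau_b^+\}}\big],
\]
while integrating out the independent implementation clock via the resolvent \eqref{fi4} (again shifted by $a$) expresses $A^{\varepsilon}(x)$ as an analogous linear combination of the quantities $\mathbb{E}_x[\mathrm{e}^{-\theta \tau_{-\varepsilon}^-} W^{(\theta+q)}(X_{\tau_{-\varepsilon}^-}+a)\mathbf{1}_{\{\tau_{-\varepsilon}^- < \tau_b^+\}}]$ and $\mathbb{E}_x[\mathrm{e}^{-\theta \tau_{-\varepsilon}^-} W^{(\theta+q)}(X_{\tau_{-\varepsilon}^-}-y)\mathbf{1}_{\{\tau_{-\varepsilon}^- < \tau_b^+\}}]$, both of which are now evaluated term by term by \eqref{eq:exp_scale} (with $p=\theta$, the second parameter equal to $\theta+q$, lower level $-\varepsilon$, upper level $b$).

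Substituting these explicit expressions into $F^{\varepsilon}(x) = A^{\varepsilon}(x) + M^{\varepsilon}(x)\, A^{\varepsilon}(0)/(1-M^{\varepsilon}(0))$ and sending $\varepsilon \downarrow 0$ is the crux. In the bounded variation case $W^{(\theta)}(0+) = 1/c > 0$, so $1-M^{\varepsilon}(0)$ has a strictly positive limit and one may pass termwise. The delicate regime --- and the main obstacle --- is the unbounded variation case, where $W^{(\theta)}(0+) = 0$ makes both $A^{\varepsilon}(0)$ and $1-M^{\varepsilon}(0)$ vanish with $\varepsilon$, yielding a $0/0$ indeterminate form. I would resolve this by first factoring the common prefactor $W^{(\theta)}(\varepsilon)/W^{(\theta)}(b+\varepsilon)$ out of numerator and denominator, after which only ratios of scale functions with well-defined, nonzero limits remain; the definition \eqref{eqg} of $g$ then collapses the surviving expression into
\[
q\Bigg[\frac{g(\theta,q,x,a)}{g(\theta,q,b,a)}\,g(\theta,q,b,-y) \;-\; g(\theta,q,x,-y)\Bigg]\mathrm{d}y,
\]
which is precisely \eqref{main2}.

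It remains to upgrade this $\varepsilon$-level identity to the stated statement for $F$ itself. Because each implementation clock is independent of $X$, the Parisian ruin time $\tau_q$ is $\mathbb{P}_x$-a.s.\ a continuity point of the sample path of $X$, so \eqref{tdp} promotes to $X_{\tau_q^{\varepsilon}} \to X_{\tau_q}$ and $\mathrm{e}^{-\theta \tau_q^{\varepsilon}} \to \mathrm{e}^{-\theta \tau_q}$ $\mathbb{P}_x$-almost surely; since $\tau_q^{\varepsilon} \geq \tau_q$ with $\tau_q^\varepsilon \downarrow \tau_q$, the events $\{\tau_q^{\varepsilon} < \tau_b^+ \wedge \tau_{-a}^-\}$ increase $\mathbb{P}_x$-a.s.\ to $\{\tau_q < \tau_b^+ \wedge \tau_{-a}^-\}$ up to a null set, and testing against bounded continuous functions of $y$ lets bounded convergence transfer the explicit limit computed above from $F^{\varepsilon}$ to $F$.
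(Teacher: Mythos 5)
Your proposal follows essentially the same approach as the paper: approximate $\parisiantime$ by $\tau_q^\varepsilon$, decompose over the $\varepsilon$-excursions (your renewal identity $F^\varepsilon(x)=A^\varepsilon(x)+M^\varepsilon(x)F^\varepsilon(0)$ with $F^\varepsilon(0)=A^\varepsilon(0)/(1-M^\varepsilon(0))$ is precisely the paper's geometric-sum formula $\overline p\,\e[\Xi^{(\ast,\varepsilon)}_{a,b}]/(1-p\,\e[\mathrm e^{-\theta\ell_1^\varepsilon}])$), evaluate $M^\varepsilon$ and $A^\varepsilon$ via \eqref{fi2}, \eqref{fi4} and \eqref{eq:exp_scale}, and resolve the $0/0$ by normalising with $W^{(\theta)}(\varepsilon)$ before sending $\varepsilon\downarrow 0$ and invoking \eqref{tdp}. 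The only differences are cosmetic (you treat bounded/unbounded variation separately rather than via the paper's unified division by $W^{(\theta)}(\varepsilon)$, and you extend from $x=0$ to $x<0$ rather than $x>0$, which is a symmetric strong-Markov step), so the argument is sound and matches the paper's proof.
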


Note that the above result can be written differently using the identity in Equation~\eqref{eq:scale_sym}. More precisely, one can re-write  $g(\theta, q, x, y)$ as follows:
\begin{equation}\label{main_v2}
g(\theta, q, x, y)=W^{(\theta)}(x+y)+q\int_0^y W^{(\theta)}(x+y-z) W^{(\theta+q)}(z) \mathrm{d}z.
\end{equation}

By taking appropriate limits in  Equation~\eqref{main2}, either with the definition of $g(\theta, q, x, y)$ given in (\ref{eqg}) or in (\ref{main_v2}), one can obtain the following corollary:
\begin{corollary}\label{C:norestriction}
For $\theta, a, b \geq0$, then:
\begin{enumerate}
\item for $x \geq -a$ and $y \in [-a,0]$, we have
\begin{equation}\label{cor:part_one}
\e_x \Big[ \mathrm{e}^{-\theta \parisiantime}, X_{\parisiantime} \in \mathrm{d}y , \parisiantime < \tau_{-a}^- \Big] =  q \Bigg[\frac{g(\theta, q, x, a)}{\mathcal{H}^{(\theta,q)}(a)} \mathcal{H}^{(\theta,q)}(-y) - g(\theta, q, x, -y)\Bigg] \mathrm{d}y .
\end{equation}
\item for $x \leq b$ and $y \in (-\infty,0]$, we have
\begin{equation}\label{cor:part_two}
\e_x \Big[ \mathrm{e}^{-\theta \parisiantime}, X_{\parisiantime} \in \mathrm{d}y , \parisiantime < \tau_{b}^+ \Big] =  q \Bigg[ \frac{\mathcal{H}^{(\theta+q,-q)}(x)}{\mathcal{H}^{(\theta+q,-q)}(b)} g(\theta, q, b, -y)- g(\theta, q, x, -y)\Bigg] \mathrm{d}y .
\end{equation}
\item for $x \in \reals$ and $y \in (-\infty,0]$, we have
\begin{multline}\label{cor:part_three}
\e_x \Big[ \mathrm{e}^{-\theta \parisiantime}, X_{\parisiantime} \in \mathrm{d}y , \parisiantime < \infty \Big] \\
= \Bigg[ \Big( \Phi(\theta+q)-\Phi(\theta) \Big) \mathcal{H}^{(\theta+q,-q)}(x) \mathcal{H}^{(\theta,q)}(-y) - q g(\theta, q, x, -y) \Bigg] \mathrm{d}y .
\end{multline}
\end{enumerate} 
\end{corollary}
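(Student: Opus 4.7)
The plan is to derive each of the three parts of the corollary by letting either $b\to\infty$, $a\to\infty$, or both, in identity \eqref{main2}. Since the event $\{\tau_q<\tau_b^+\wedge\tau_{-a}^-\}$ is monotone increasing in both $a$ and $b$, monotone convergence applied to the left-hand side of \eqref{main2} (whose integrand is nonnegative when $\theta\geq 0$) justifies passage to the limit. The whole argument therefore reduces to identifying the asymptotic behaviour of $g(\theta,q,\cdot,\cdot)$ as one of its arguments tends to infinity and matching leading-order terms with the auxiliary function $\mathcal{H}^{(p,q)}$; the standard estimate $W^{(p)}(u)\sim \mathrm{e}^{\Phi(p)u}/\psi'(\Phi(p))$ as $u\to\infty$ is the essential input.

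For part~(1), I would use the alternative representation \eqref{main_v2} of $g$ and apply the above asymptotic to $W^{(\theta)}$ to obtain, for any fixed $c\geq 0$,
\[
g(\theta,q,b,c)\sim\frac{\mathrm{e}^{\Phi(\theta)b}}{\psi'(\Phi(\theta))}\mathcal{H}^{(\theta,q)}(c)\qquad\text{as }b\to\infty,
\]
directly from the definition of $\mathcal{H}^{(\theta,q)}$. Applying this to both $g(\theta,q,b,a)$ and $g(\theta,q,b,-y)$ in \eqref{main2} yields the ratio $\mathcal{H}^{(\theta,q)}(-y)/\mathcal{H}^{(\theta,q)}(a)$ and hence \eqref{cor:part_one}. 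For part~(2), I would use instead the original form \eqref{eqg} and the asymptotic applied to $W^{(\theta+q)}$; a change of variables of the form $u=x-z$ inside the resulting integral would let me recognise
\[
g(\theta,q,x,a)\sim\frac{\mathrm{e}^{\Phi(\theta+q)a}}{\psi'(\Phi(\theta+q))}\mathcal{H}^{(\theta+q,-q)}(x)\qquad\text{as }a\to\infty,
\]
and similarly with $x$ replaced by $b$, so that taking the ratio in \eqref{main2} produces $\mathcal{H}^{(\theta+q,-q)}(x)/\mathcal{H}^{(\theta+q,-q)}(b)$ and hence \eqref{cor:part_two}.

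Part~(3) is obtained by starting from \eqref{cor:part_two} and letting $b\to\infty$. The step I expect to be the main obstacle is the asymptotic of $\mathcal{H}^{(\theta+q,-q)}(b)$: evaluating the defining Laplace transform of $W^{(\theta)}$ at $\Phi(\theta+q)$ gives $\int_0^\infty \mathrm{e}^{-\Phi(\theta+q)y}W^{(\theta)}(y)\,\mathrm{d}y=1/q$, so the bracket in the definition of $\mathcal{H}^{(\theta+q,-q)}(b)$ vanishes in the limit and the naive leading-order term $\mathrm{e}^{\Phi(\theta+q)b}$ disappears. A subleading analysis is therefore required: I would rewrite
\[
\mathcal{H}^{(\theta+q,-q)}(b)=q\,\mathrm{e}^{\Phi(\theta+q)b}\int_b^\infty \mathrm{e}^{-\Phi(\theta+q)y}W^{(\theta)}(y)\,\mathrm{d}y
\]
and substitute the asymptotic of $W^{(\theta)}$ to obtain $\mathcal{H}^{(\theta+q,-q)}(b)\sim q\mathrm{e}^{\Phi(\theta)b}/[(\Phi(\theta+q)-\Phi(\theta))\psi'(\Phi(\theta))]$. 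Combining this with the asymptotic of $g(\theta,q,b,-y)$ from part~(1) cancels the factor $\psi'(\Phi(\theta))$, produces the coefficient $\Phi(\theta+q)-\Phi(\theta)$ in front of $\mathcal{H}^{(\theta+q,-q)}(x)\mathcal{H}^{(\theta,q)}(-y)$ after absorbing the leading $q$ in \eqref{cor:part_two}, and yields \eqref{cor:part_three}.
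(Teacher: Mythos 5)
Your overall strategy -- take $a\to\infty$, $b\to\infty$, or both in the main theorem -- matches the paper exactly, and the final formulas are correct. The differences are in the technical lemmas you invoke.

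For parts (1) and (2) you work with the one-sided exponential asymptotic $W^{(p)}(u)\sim\mathrm{e}^{\Phi(p)u}/\psi'(\Phi(p))$, whereas the paper divides numerator and denominator by $W^{(\theta)}(b)$ (resp.\ $W^{(\theta+q)}(a)$) and uses only the ratio limit $\lim_{c\to\infty}W^{(r)}(c-u)/W^{(r)}(c)=\mathrm{e}^{-\Phi(r)u}$. These give the same answer because the constant $1/\psi'(\Phi(\cdot))$ cancels in every ratio you form, but the ratio-limit version is a little more robust: your asymptotic silently assumes $\psi'(\Phi(\theta))>0$, which fails when $\theta=0$ and $\psi'(0+)=0$ (oscillating case); the ratio limit still holds there. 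Since the corollary is stated for all $\theta\geq0$ without a net-profit assumption, it would be cleanest to phrase the $b\to\infty$ step for part (1) via the ratio limit; your $a\to\infty$ step in part (2) involves $W^{(\theta+q)}$ with $\theta+q>0$, so there is no issue.

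For part (3) your argument is genuinely different from the paper's and is a nice alternative. The paper obtains the key cancellation limit \eqref{lim1} by applying l'H\^opital to $\mathrm{e}^{-\Phi(\theta+q)b}W^{(\theta)}(b)\big/\bigl(1-q\int_0^b\mathrm{e}^{-\Phi(\theta+q)z}W^{(\theta)}(z)\,\mathrm{d}z\bigr)$ together with $\lim_{b\to\infty}W^{(\theta)\prime}(b)/W^{(\theta)}(b)=\Phi(\theta)$, then derives \eqref{lim2} by rewriting via \eqref{eq:scale_sym}. You instead spot that $\int_0^\infty\mathrm{e}^{-\Phi(\theta+q)y}W^{(\theta)}(y)\,\mathrm{d}y=1/q$ (evaluating the defining Laplace transform of $W^{(\theta)}$ at $\Phi(\theta+q)$) and express $\mathcal{H}^{(\theta+q,-q)}(b)$ as a tail integral,
\[
\mathcal{H}^{(\theta+q,-q)}(b)=q\,\mathrm{e}^{\Phi(\theta+q)b}\int_b^\infty\mathrm{e}^{-\Phi(\theta+q)y}W^{(\theta)}(y)\,\mathrm{d}y,
\]
whose asymptotic follows immediately from that of $W^{(\theta)}$. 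This sidesteps both l'H\^opital and the auxiliary rewrite and is arguably more transparent; the trade-off is the same reliance on $\psi'(\Phi(\theta))>0$ noted above. Either way, combined with the asymptotic of $g(\theta,q,b,-y)$ from part (1), the leading factors cancel and the coefficient $\Phi(\theta+q)-\Phi(\theta)$ emerges as in \eqref{cor:part_three}. One small bookkeeping point worth stating explicitly: the interchange of the $b\to\infty$ or $a\to\infty$ limit with the $\mathrm{d}y$-integration is justified (as you say) by monotonicity of the events in $a,b$ together with monotone or dominated convergence; the paper invokes the same.
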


Before moving on to the proofs of these results, let us see how we can use the Gerber--Shiu distributions of Theorem~\ref{main} and Corollary~\ref{C:norestriction} to compute specific Gerber--Shiu functions and derive a number of identities established in the literature. Consider for $\lambda \geq 0$ the Gerber--Shiu function
$$
\e_x \Big[ \mathrm{e}^{-\theta \parisiantime + \lambda X_{\parisiantime}} ; \parisiantime < \tau^+_b \Big] = \int_{-\infty}^0 \mathrm{e}^{\lambda y} \e_x \left[ \mathrm{e}^{-\theta \parisiantime}, X_{\parisiantime} \in \mathrm{d}y , \parisiantime < \tau^+_b \right].
$$

To calculate this integral, we make use of the following identity (see Equation (6) in \cite{LoRZ})
$$
(q-p)\int_0^x W^{(p)}(x-y) Z^{(q)}(y) \mathrm{d}y = Z^{(q)}(x) - Z^{(p)}(x),
$$
which holds for for $p,q \geq 0$ and $x \in \reals$.
Invoking Equation~\eqref{cor:part_two}, a  direct calculation then yields (by letting $\lambda\downarrow 0$)
that the Laplace transform of the time to ruin before the surplus exceeds the level $b$ is given by
\begin{equation}\label{extL}
\e_x \Big[ \mathrm{e}^{-\theta \parisiantime} ; \parisiantime < \tau_b^+ \Big] =\frac{q}{\theta+q}\left(Z^{(\theta)}(x) -\frac{\mathcal{H}^{(\theta+q, -q)}(x)}{\mathcal{H}^{(\theta+q, -q)}(b)}Z^{(\theta)}(b)\right).
\end{equation}

The above identity extends the result of  Landriault et al. (see Lemma 2.2 in \cite{LRZ-0}), in the case of exponential implementation delays and when the insurance risk process $X$ has paths of bounded variation.  We observe that the function $\mathcal{H}^{(\theta+q, -q)}$ is the same as the function $H^{(\theta)}_d$ defined in section 2.2 in \cite{LRZ-0}.

Next, we are interested in computing the probability of Parisian ruin in the case when the net profit condition (\ref{net_profit_condition}) is satisfied. To this end we take $\theta=0$ and let $b\rightarrow \infty$ and find
\begin{equation}\label{Parisruin}
\p_x \left( \parisiantime < \infty \right) =\lim_{b\rightarrow\infty}\left(1-\frac{\mathcal{H}^{(q,-q)}(x)}{\mathcal{H}^{(q,-q)}(b)}\right)=1 - \psi'(0+) \frac{\Phi(q)}{q} \mathcal{H}^{(q,-q)}(x)
\end{equation}
as $\lim_{b\rightarrow \infty}W(b)=1/\psi'(0)$. The probability of Parisian ruin (\ref{Parisruin}) agrees with Theorem 1 and Corollary 1 in \cite{LRZ} since we have the following identity (using a change of variable and an integration by parts):
$$
\mathcal{H}^{(q,-q)}(x) = q \int_0^{\infty} \mathrm{e}^{-\Phi(q)y} W(x+y) \mathrm{d}y .
$$

We finish this section with two remarks.
\begin{rem}
There is an interesting link with the results obtained in \cite{AIZ} concerning exit identities for a spectrally negative L\'evy process observed at Poisson arrival times. In particular, consider
\[T_0^-=\min\{T_i: X(T_i)<0\}\]
where $T_i$ are the arrival times of an independent Poisson process with rate $q$.
 By taking $\theta=0$ and integrating $e^{uy}$ ($u\geq 0$) with respect to the density given in Equation~\eqref{cor:part_two} of Corollary \ref{C:norestriction} we retrieve the same expression for 
\[\mathbb{E}_x\left[e^{uX_{\tau_q}},\tau_q<\tau_b^+\right]\] as is given in equation (15) of Theorem 3.1 in \cite{AIZ} for 
\[\mathbb{E}_x\left[e^{u X_{T_0^-}}, T_0^-<\tau_b^+\right]. \]
The method of proof in \cite{AIZ} relies mostly on the strong Markov property and fluctuation identities for spectrally negative L\'evy processes.
\end{rem}

\begin{rem} Note that since  $\{\tau_q<\tau_b^+\}=\{\overline{X}_{\tau_q}<b\}$, with $\overline{X}_t=\sup_{0\leq s\leq t}X_s$ the running supremum process, we can also derive a more general form for the Gerber--Shiu measure that takes into account the law of the process and its running supremum (as well as its running infimum) up to the time of Parisian ruin. For the sake of brevity the explicit form of this joint law is left to the reader.
\end{rem}

\section{Proofs}

\begin{proof}[Proof of Theorem~\ref{main}]
%
Take $\varepsilon\in(0,a)$. We first compute
\begin{equation}\label{ott}
\mathbb{E}\left[\mathrm e^{-\theta\tau^{\varepsilon}_q}f \left(-X_{\tau^{\varepsilon}_q} \right) \mathbf{1}_{\{\tau^{\varepsilon}_q<\tau_b^+\wedge\tau_{-a}^-\}}\right]
\end{equation}
for a bounded, continuous function $f$.
Here, we  express (\ref{ott}) in terms of the $\varepsilon$-excursions of $X$ confined to the interval $[-a,b]$ and such that the time that each $\varepsilon$-excursion away from $0$ spends below $0$ after reaching the level $-\varepsilon$ is less than its associated implementation clock, followed by the first $\varepsilon$-excursion away from $0$ that exits the interval $[-a,b]$ or such that the time that the $\varepsilon$-excursion spends below $0$ after reaching the level $-\varepsilon$ is greater than its implementation clock. More precisely, let  $(\xi_s^{i,\varepsilon}, 0\le s\le \ell^{\varepsilon}_i)$ be the  $i$-th $\varepsilon$-excursion of $X$ away from $0$ confined to the interval $[-a,b]$ and such that $\ell^{\varepsilon}_i-\sigma_{-\varepsilon}^i\leq \mathbf{e}_q^i$, where $\ell^{\varepsilon}_i$ denotes the length of  $\xi^{i,\varepsilon}$, and 
\[
\sigma_{-\varepsilon}^i=\inf\{s<\ell^{\varepsilon}_i:\xi_s^{i,\varepsilon}<-\varepsilon\}.
\] 
Similarly, let $(\xi^{*,\varepsilon}_s, 0\le s\le \ell^{\varepsilon}_*)$  be the first $\varepsilon$-excursion  of $X$  away from $0$   that exits the interval $[-a,b]$, or such that $\ell^{\varepsilon}_*-\sigma_{-\varepsilon}^*>\mathbf{e}_q^{k_q}$ where $\ell^{\varepsilon}_*$  is its length and 
\[
\sigma_{-\varepsilon}^*=\inf\{s<\ell^{\varepsilon}_*:\xi_s^{*,\varepsilon}<-\varepsilon\}.
\] 
We also define the infimum and supremum of the  excursion $\xi^{*,\varepsilon}$, as follows
\[
\underline{\xi}^{*,\varepsilon}=\inf_{s<\ell^{\varepsilon}_*}\xi^{*,\varepsilon}_s \qquad\textrm{ and }\qquad \overline{\xi}^{*,\varepsilon}=\sup_{s<\ell^{\varepsilon}_*}\xi^{*,\varepsilon}_s.
\]
From the strong Markov property, it is clear that the random variables $\left( \mathrm e^{-q\ell^{\varepsilon}_i} \right)_{i \geq 1}$  are i.i.d. and also independent of 
\[
\Xi^{(*, \varepsilon)}_{a,b}:= \mathrm e^{-\theta(\sigma_{-\varepsilon}^*+\mathbf{e}_q^{k_q})}f\Big(-\xi^{*,\varepsilon}_{\sigma_{-\varepsilon}^*+\mathbf{e}_q^{k_q}}\Big)\mathbf{1}_{\{\ell^{\varepsilon}_*<\infty\}}\mathbf{1}_{\{\overline{\xi}^{*,\varepsilon}\leq b\}}\mathbf{1}_{\{\underline{\xi}^{*,\varepsilon}\geq -a\}}.
\]
Let $\zeta = \tau_0^{+,\varepsilon}$ and $p=\p(E)$, where
\[
E=\Big\{\sup_{t\leq \zeta} X_t\leq b,\inf_{t\leq \zeta}X_t\geq -a,  \zeta-\tau_{-\varepsilon}^-\leq \mathbf{e}_q\Big\}.
\] 
A standard description of $\varepsilon$-excursions of $X$ away from $0$ confined to the interval $[-a,b]$ with the amount of time spent below $0$ after reaching the level $-\varepsilon$ less than an exponential time, dictates that the number of such $\varepsilon$-excursions is distributed according to an independent geometric random variable, say $G_p$, (supported on $\{0,1,2,\ldots\}$) with parameter $p$. Moreover, the random variables $\left( \mathrm e^{-q\ell^{\varepsilon}_i} \right)_{i \geq 1}$ have the same distribution as $\mathrm e^{-\theta\zeta}$ under the conditional law $\p(\cdot|E)$ and the random variable $\Xi^{(*, \varepsilon)}_{a,b}$ is equal in distribution to 
\[
\mathrm e^{-\theta (\tau_{-\varepsilon}^-+\mathbf{e}_q)} f \left( -X_{\tau_{-\varepsilon}^- +\mathbf{e}_q} \right) \mathbf{1}_{\{\inf_{t\leq \tau_{-\varepsilon}^- +\mathbf{e}_q} X_t \geq -a\}}\mathbf{1}_{\{\sup_{t\leq \tau_{-\varepsilon}^- +\mathbf{e}_q} X_t\leq b\}},
\]
but now under the conditional law $\p(\cdot|E^c)$. Then, it follows that
\begin{equation}\label{putin}
\begin{split}
\mathbb{E}\bigg[ \mathrm e^{-\theta\tau_q^{\varepsilon}}&f \left(-X_{\tau_q^{\varepsilon}} \right) \mathbf{1}_{\{\tau^{\varepsilon}_q<\tau_b^+\wedge\tau_{-a}^-\}}\bigg]\\
&=\e\left[\prod_{i=0}^{G_p} \mathrm e^{-\theta\ell^{\varepsilon}_i} \mathrm e^{-\theta(\sigma_{-\varepsilon}^*+\mathbf{e}_q^{k_q})} f\Big(-\xi^{*,\varepsilon}_{\sigma_{-\varepsilon}^*+\mathbf{e}_q^{k_q}}\Big)\mathbf{1}_{\{\ell^{\varepsilon}_*<\infty\}}\mathbf{1}_{\{\overline{\xi}^{*,\varepsilon}\leq b\}}\mathbf{1}_{\{\underline{\xi}^{*,\varepsilon}\geq -a\}} \right]\\
&=\mathbb{E}\left[\e\left[ \mathrm e^{-\theta\ell^{\varepsilon}_1}\right]^{G_p}\right]\e\left[ \mathrm e^{-\theta(\sigma_{-\varepsilon}^*+\mathbf{e}_q^{k_q})}f\Big(-\xi^{*,\varepsilon}_{\sigma_{-\varepsilon}^*+\mathbf{e}_q^{k_q}}\Big)\mathbf{1}_{\{\ell^{\varepsilon}_*<\infty\}}\mathbf{1}_{\{\overline{\xi}^{*,\varepsilon}\leq b\}}\mathbf{1}_{\{\underline{\xi}^{*,\varepsilon}\geq -a\}} \right] .
\end{split}
\end{equation}
Recall that the moment generating function $F$ of the  geometric random variable $G_p$ satisfies
\[
F(s)=\frac{\overline{p}}{1-ps},\quad |s|<\frac{1}{p},
\]
where $\overline{p}=1-p$.
Therefore, if we can make sure that $\e\left[\mathrm e^{-\theta\ell^{\varepsilon}_1}\right] < 1/p$, then
\begin{equation}\label{pieces together}
\mathbb{E}\left[\e\left[\mathrm e^{-\theta\ell^{\varepsilon}_1}\right]^{G_p}\right] =\displaystyle\frac{\overline{p}}{1-p\e\left[\mathrm e^{-\theta\ell^{\varepsilon}_1}\right]} .
\end{equation}
Now, using \eqref{putin} and \eqref{pieces together}, we have
\begin{equation}\label{it}
\mathbb{E}\bigg[ \mathrm e^{-\theta \tau_q^{\varepsilon}} f \left(-X_{\tau_q^{\varepsilon}} \right) \mathbf{1}_{\{\tau^{\varepsilon}_q<\tau_b^+\wedge\tau_{-a}^-\}} \bigg] = \frac{\overline{p} \e \left[ \Xi^{(\ast,\varepsilon)}_{a,b} \right]}{1 - p \e \left[ \mathrm{e}^{-\theta \ell_1^\varepsilon} \right]} .
\end{equation}

Taking account of the remarks in the previous paragraph and making use of the strong Markov property, we  have
\begin{align*}
\e\left[\mathrm e^{-\theta\ell^{\varepsilon}_1}\right]&=\frac{1}{p}\e\left[\mathrm e^{-\theta\tau_{-\varepsilon}^-} \mathbf{1}_{\{\tau_{-\varepsilon}^-<\tau_b^+ \wedge \tau_{-a}^-\}} \e_{X_{\tau_{-\varepsilon}^-}}\left[ \mathrm e^{-(\theta+q)\tau_0^+};\tau_0^+<\tau_{-a}^-\right]\right]\notag\\
&=\frac{1}{p} \e_{\varepsilon}\left[\mathrm e^{-\theta\tau_{0}^-}\mathbf{1}_{\{\tau_{0}^-<\tau_{b+\varepsilon}^+\}} \frac{W^{(\theta+q)}(X_{\tau_{0}^-}-\varepsilon+a)}{W^{(\theta+q)}(a)}\right]\notag .
\end{align*}
Note that we do not need the indicator function of $\{X_{\tau_0^-}-\varepsilon>-a\}$ since, on its complement, the scale function vanishes. Note also that it is now clear from the above computation that $\e\left[\mathrm e^{-\theta\ell^{\varepsilon}_1}\right] < 1/p$. Using the identity in Equation~\eqref{eq:scale_sym}, one can write
\begin{multline*}
\e_\varepsilon \left[ \mathrm{e}^{-\theta \tau_0^-} W^{(\theta+q)}\left( X_{\tau_0^-}-\varepsilon+a\right) \ind_{\{\tau_0^- < \tau_{b+\varepsilon}^+\}} \right] \\
= W^{(\theta+q)}(a) - q \int_{a-\varepsilon}^a W^{(\theta)}(a-z) W^{(\theta+q)}(z) \mathrm{d}z \\
- \frac{W^{(\theta)}(\varepsilon)}{W^{(\theta)}(b+\varepsilon)} \left( W^{(\theta+q)}(b+a) - q \int_{a-\varepsilon}^{b+a} W^{(\theta)}(b+a-z) W^{(\theta+q)}(z) \mathrm{d}z \right) .
\end{multline*}
As a consequence,
\begin{multline*}
1 - p \e \left[ \mathrm{e}^{-\theta \ell_1^\varepsilon} \right] = \frac{q}{W^{(\theta+q)}(a)} \int_{a-\varepsilon}^a W^{(\theta)}(a-z) W^{(\theta+q)}(z) \mathrm{d}z \\
+ \frac{W^{(\theta)}(\varepsilon)}{W^{(\theta+q)}(a) W^{(\theta)}(b+\varepsilon)} \left( W^{(\theta+q)}(b+a) - q \int_{a-\varepsilon}^{b+a} W^{(\theta)}(b+a-z) W^{(\theta+q)}(z) \mathrm{d}z \right) .
\end{multline*}

Next, we compute  the Laplace transform of $\Xi^{(*, \varepsilon)}_{a,b}$. Recalling that under $\p(\cdot|E^c)$ and on the event  $\{\overline{\xi}^{*,\varepsilon}<b,\underline{\xi}^{*,\varepsilon}\geq -a \}$, we necessarily have that the excursion goes below the level $-\varepsilon$ and the exponential clock rings before the end of the excursion, i.e.
\begin{multline}\label{bigshit}
\overline{p} \e \left[ \Xi^{(\ast,\varepsilon)}_{a,b} \right] = \e \left[ \mathrm{e}^{-\theta \tau_{-\varepsilon}^-} \e_{X_{\tau_{-\varepsilon}^-}} \left[ \mathrm{e}^{-\theta \exptime} f \left( -X_{\exptime} \right) ; \exptime < \tau_{-a}^- \wedge \tau_0^+ \right]  \ind_{\{\tau_{-\varepsilon}^- < \tau_{-a}^- \wedge \tau_b^+\}} \right] \\
= q \int_{-a}^0 f(-y) \e \left[ \mathrm{e}^{-\theta \tau_{-\varepsilon}^-} \left\{\frac{W^{(\theta+q)}(X_{\tau_{-\varepsilon}^-}+a)W^{(\theta+q)}(-y)}{W^{(\theta+q)}(a)}-W^{(\theta+q)}(X_{\tau_{-\varepsilon}^-}-y) \right\}  \ind_{\{\tau_{-\varepsilon}^- < \tau_b^+\}} \right] \mathrm{d}y ,
\end{multline}
thanks to Fubini's theorem and identity~\eqref{fi4} in Theorem~\ref{fi}. Using once more the identity in Equation~\eqref{eq:exp_scale} and rearranging the terms, one can write
\begin{multline*}
\e \left[ \mathrm{e}^{-\theta \tau_{-\varepsilon}^-} \left\{\frac{W^{(\theta+q)}(X_{\tau_{-\varepsilon}^-}+a)W^{(\theta+q)}(-y)}{W^{(\theta+q)}(a)}-W^{(\theta+q)}(X_{\tau_{-\varepsilon}^-}-y) \right\}  \ind_{\{\tau_{-\varepsilon}^- < \tau_b^+\}} \right] \\
= \frac{W^{(\theta)}(\varepsilon)}{W^{(\theta)}(b+\varepsilon)} \left\lbrace \left[ W^{(\theta+q)}(b-y) - q \int_0^{b+\varepsilon} W^{(\theta)}(b+\varepsilon-z) W^{(\theta+q)}(z-y-\varepsilon) \mathrm{d}z \right] \right. \\
\qquad - \frac{W^{(\theta+q)}(-y)}{W^{(\theta+q)}(a)} \left. \left[ W^{(\theta+q)}(b+a) - q \int_0^{b+\varepsilon} W^{(\theta)}(b+\varepsilon-z) W^{(\theta+q)}(z+a-\varepsilon) \mathrm{d}z \right] \right\rbrace \\
+ q \left\lbrace \int_0^\varepsilon W^{(\theta)}(\varepsilon-z) W^{(\theta+q)}(z-y-\varepsilon) \mathrm{d}z \right. \\
- \left. \frac{W^{(\theta+q)}(-y)}{W^{(\theta+q)}(a)} \int_0^\varepsilon W^{(\theta)}(\varepsilon-z) W^{(\theta+q)}(z+a-\varepsilon) \mathrm{d}z \right\rbrace .
\end{multline*}

Now we are interested in computing the limit of $\mathbb{E}\bigg[ \mathrm e^{-\theta \tau_q^{\varepsilon}} f \left(-X_{\tau_q^{\varepsilon}} \right) \mathbf{1}_{\{\tau^{\varepsilon}_q<\tau_b^+\wedge\tau_{-a}^-\}} \bigg]$, as given in Equation~\eqref{it}, when $\varepsilon$ goes to $0$. We use the above computations for the numerator and the denominator, and we divide both by $W^{(\theta)}(\varepsilon)$. First, we have
\begin{multline*}
\frac{1 - p \e \left[ \mathrm{e}^{-\theta \ell_1^\varepsilon} \right]}{W^{(\theta)}(\varepsilon)} = \frac{q}{W^{(\theta+q)}(a)} \frac{\int_{a-\varepsilon}^a W^{(\theta)}(a-z) W^{(\theta+q)}(z) \mathrm{d}z}{W^{(\theta)}(\varepsilon)} \\
+ \frac{1}{W^{(\theta+q)}(a) W^{(\theta)}(b+\varepsilon)} \left( W^{\theta+q)}(b+a) - q \int_{a-\varepsilon}^{b+a} W^{(\theta)}(b+a-z) W^{(\theta+q)}(z) \mathrm{d}z \right) \\
\underset{\varepsilon \downarrow 0}{\longrightarrow} \frac{1}{W^{(\theta+q)}(a) W^{(\theta)}(b)} \left( W^{(\theta+q)}(b+a) - q \int_{a}^{b+a} W^{(\theta)}(b+a-z) W^{(\theta+q)}(z) \mathrm{d}z \right) .
\end{multline*}
Indeed, when the process has paths of bounded variation, we have
$$
\frac{\int_{a-\varepsilon}^a W^{(\theta)}(a-z) W^{(\theta+q)}(z) \mathrm{d}z}{W^{(\theta)}(\varepsilon)} \underset{\varepsilon \downarrow 0}{\longrightarrow} \frac{0}{W^{(\theta)}(0)} = 0 ,
$$
while, when it has paths of unbounded variation, we have
$$
\frac{1}{W^{(\theta)}(\varepsilon)/\varepsilon} \frac{\int_{a-\varepsilon}^a W^{(\theta)}(a-z) W^{(\theta+q)}(z) \mathrm{d}z}{\varepsilon} \underset{\varepsilon \downarrow 0}{\longrightarrow} \frac{W^{(\theta)}(0) W^{(\theta+q)}(a)}{W^{(\theta)\prime}(0)} = 0 .
$$

Similarly, using Lebesgue's dominated convergence theorem, we have
\begin{multline*}
\frac{\overline{p} \e \left[ \Xi^{(\ast,\varepsilon)}_{a,b} \right] }{W^{(\theta)}(\varepsilon)} \underset{\varepsilon \downarrow 0}{\longrightarrow} q \int_{-a}^0 \frac{f(-y)}{W^{(\theta)}(b)} \left\lbrace \left[ W^{(\theta+q)}(b-y) - q \int_0^{b} W^{(\theta)}(b-z) W^{(\theta+q)}(z-y) \mathrm{d}z \right] \right. \\
\qquad - \frac{W^{(\theta+q)}(-y)}{W^{(\theta+q)}(a)} \left. \left[ W^{(\theta+q)}(b+a) - q \int_0^{b} W^{(\theta)}(b-z) W^{(\theta+q)}(z+a) \mathrm{d}z \right] \right\rbrace \mathrm{d}y.
\end{multline*}

Putting all the pieces together, we deduce
\begin{multline}\label{con}
\lim_{\varepsilon\downarrow0} \mathbb{E}\bigg[\mathrm e^{-\theta \tau^{\varepsilon}_q} f \left(-X_{\tau^{\varepsilon}_q} \right) \mathbf{1}_{\{\tau^{\varepsilon}_q<\tau_b^+\wedge\tau_{-a}^-\}}\bigg] \\
= q \int_{-a}^0 f(-y) \left\lbrace W^{(\theta+q)}(a) \frac{g(\theta, q, b, -y)}{g(\theta, q, b, a)} - W^{(\theta+q)}(-y) \right\rbrace \mathrm{d}y ,
\end{multline}
where $g(\theta, q, x, y)$ is given as in (\ref{eqg}).

Hence, from (\ref{tdp}) we have that if $f$ is a continuous and bounded function, we can use Lebesgue's dominated convergence theorem to conclude 
$$
\mathbb{E}\bigg[\mathrm e^{-\theta \tau_q} f \left(-X_{\tau_q} \right) \mathbf{1}_{\{\tau_q<\tau_b^+\wedge\tau_{-a}^-\}}\bigg] = \lim_{\varepsilon\downarrow0} \mathbb{E}\bigg[\mathrm e^{-\theta \tau^{\varepsilon}_q} f \left(-X_{\tau^{\varepsilon}_q} \right) \mathbf{1}_{\{\tau^{\varepsilon}_q<\tau_b^+\wedge\tau_{-a}^-\}}\bigg] .
$$

In order to prove the result when the process starts at $x>0$, we consider the first $0$-excursion. Here, we have two possibilities when the process $X$ goes below the level $0$, either it touches $0$ (coming from below) before the exponential clock rings, or the clock rings before the process $X$ finishes its negative excursion. In the first case, once the process $X$ returns to $0$, we can start the procedure all over again. Hence, using the strong Markov property and the independence between the excursions, we obtain 
\begin{align*}
\e_x & \bigg[\mathrm e^{-\theta \tau_q} f \left(-X_{\tau_q} \right) \mathbf{1}_{\{\tau_q<\tau_b^+\wedge\tau_{-a}^-\}}\bigg] \\
&= \e_x \left[\mathrm e^{-\theta \tau_0^-} \e_{X_{\tau_0^-}} \Big[\mathrm e^{-\theta \mathbf{e}_q} f \left( -X_{\mathbf{e}_q} \right) \mathbf{1}_{\{\mathbf{e}_q < \tau_{-a}^- \wedge \tau_0^+\}} \Big] \mathbf{1}_{\{\tau_0^-<\tau_b^+ \wedge \tau_{-a}^-\}} \right] \\
& \qquad + \e_x \left[\mathrm e^{-\theta \tau_0^-} \e_{X_{\tau_0^-}} \Big[\mathrm e^{-\theta \tau_0^+} ; \mathbf{1}_{\{\tau_0^+ < \tau_{-a}^- \wedge \mathbf{e}_q\}} \Big] \mathbf{1}_{\{\tau_0^-<\tau_b^+\}} \right] \e_0 \bigg[\mathrm e^{-\theta \tau_q} f \left(-X_{\tau_q} \right) \mathbf{1}_{\{\tau_q<\tau_b^+\wedge\tau_{-a}^-\}}\bigg] .
\end{align*}
Using once again the identities in Equations~\eqref{fi2}, \eqref{fi4} and~\eqref{eq:exp_scale}, and putting all the pieces together yield the result.
\end{proof}

\begin{proof}[Proof of Corollary~\ref{C:norestriction}]
The first two results in Equation~\eqref{cor:part_one} and Equation~\eqref{cor:part_two} follow by taking appropriate limits, i.e.\ letting $a$ and $b$ go to infinity in Equation~\eqref{main2} or Equation~\eqref{main_v2}, and by using the following identity (see e.g.\ Exercice 8.5 in \cite{K}): for  $r\geq 0$ and $x \in \reals$,
$$
\lim_{c \to \infty} \frac{W^{(r)}(c-x)}{W^{(r)}(c)} = \mathrm e^{-\Phi(r) x} .
$$
The third part of the Corollary, i.e.\ Equation~\eqref{cor:part_three}, is obtained by computing the following limit
$$
\lim_{b \to \infty} \e_x \left[ \mathrm{e}^{-\theta \parisiantime}, X_{\parisiantime} \in \mathrm{d}y , \parisiantime < \tau_b^+ \right] = \e_x \left[ \mathrm{e}^{-\theta \parisiantime}, X_{\parisiantime} \in \mathrm{d}y , \parisiantime < \infty \right] ,
$$
and by observing that
\begin{equation}\label{lim1}
\lim_{b \to \infty} \frac{W^{(\theta)}(b)}{\mathrm{e}^{\Phi(\theta+q)b} - q \int_0^b W^{(\theta)}(b-z) \mathrm{e}^{\Phi(\theta+q)z} \mathrm{d}z} = \frac{\Phi(\theta+q)-\Phi(\theta)}{q}
\end{equation}
and
\begin{multline}
\lim_{b \to \infty} \frac{W^{(\theta+q)}(b-y) - q \int_0^b W^{(\theta)}(b-z) W^{(\theta+q)}(z-y) \mathrm{d}z}{\mathrm{e}^{\Phi(\theta+q)b} - q \int_0^b W^{(\theta)}(b-z) \mathrm{e}^{\Phi(\theta+q)z} \mathrm{d}z} \\
= \frac{\Phi(\theta+q)-\Phi(\theta)}{q} \left( \mathrm{e}^{-\Phi(\theta)y} + q \int_0^{-y} \mathrm{e}^{-\Phi(\theta)(y+z)} W^{(\theta+q)}(z) \mathrm{d}z \right) .\label{lim2}
\end{multline}
Here, (\ref{lim1}) follows from an application of l'H\^opital's rule to the quotient
\[\frac{e^{-\Phi(\theta+q)b}W^{(\theta)}(b)}{1-q\int_0^b e^{-\Phi(\theta+q)z}W^{(\theta)}(z)\mathrm{d}z}
\]
and
\[\lim_{b\rightarrow\infty} \frac{W^{(\theta)^\prime}(b)}{W^{(\theta)}(b)}=\Phi(\theta),\]
whereas (\ref{lim2}) can be obtained by combining (\ref{lim1}) with

\[q\int_0^b W^{(\theta)}(b-z) W^{(\theta+q)}(z-y) \mathrm{d}z=W^{(\theta+q)}(b-y)-W^{(\theta)}(b-y)-q\int_0^{-y}W^{(\theta)}(b-z-y)W^{(\theta+q)}(z)\mathrm{d}z,\]
with the latter due to (\ref{eq:scale_sym}).

\end{proof}


\section*{Acknowledgments}
E.J. Baurdoux was visiting CIMAT, Guanajuato when part of this work was carried out and he is grateful for their hospitality and support.

\end{document}